\documentclass[journal]{IEEEtran}
\usepackage{amsmath,amssymb,amsthm}
\usepackage{graphicx,xcolor}
\usepackage{cite}
\usepackage[colorlinks=true,linkcolor=blue,citecolor=blue,urlcolor=blue]{hyperref}

\graphicspath{{./},{./figures/}}

\newtheorem{lemma}{Lemma}
\newtheorem{proposition}{Proposition}

\newtheorem{remark}{Remark}

\newcommand{\R}{\mathbb{R}}

\newcommand{\Spp}{\mathbb{S}^{n}_{++}}
\newcommand{\range}{\operatorname{\mathcal R}}
\newcommand{\Null}{\operatorname{\mathcal N}}
\newcommand{\GLp}{\mathrm{GL}^{+}}

\title{\huge Addendum to ``Optimal Steering of a Linear Stochastic System
to a Final Probability Distribution, Part II''}
\author{Yongxin Chen$^\star$, Tryphon T.\ Georgiou$^\dagger$, and Michele Pavon$^\ddagger$\thanks{This work was supported in part by the NSF under grants ECCS-2347357 and 1942523,
the AFOSR under FA9550-24-1-0278, the ARO under W911NF-22-1-0292\\
$^\star$School of Aerospace Engineering, Georgia Institute of Technology, Atlanta, GA 30332 USA
(yongchen@gatech.edu).\\
$^\dagger$Mechanical and Aerospace Engineering, University of California, Irvine, Irvine,
CA 92697 USA (tryphon@uci.edu).\\
$^\ddagger$Universit\`a di Padova, Padova, 35121 Italy (pavon@math.unipd.it).
}}
\date{}

\begin{document}
\maketitle
\thispagestyle{empty}
\pagestyle{empty}
\nocite{ChenGeorgiouPavonPartI}

\begin{abstract}
We hereby fill in a gap in an induction argument used in the proof of
 \cite[Theorem~3]{ChenGeorgiouPavonPartII} on state-covariance controllability of linear stochastic systems.  The mathematical statement  is true.  The missing point in the proof concerns the assertion that, in the canonical
single-input case, the control can be assigned arbitrary endpoint values.
We amend the argument by proving a slightly stronger induction statement
that permits arbitrary finite endpoint jets of the control. An independent approach to homogeneous state-covariance controllability was provided in \cite{AbdelgalilGeorgiou2025} as a corollary of strong controllability results on $\GLp(n,\mathbb R)$, and shown herein that it extends to the case of additive noise as well.
\end{abstract}

\section{Introduction}

The steering of state statistics of stochastic dynamical systems--often
referred to as covariance control or covariance steering--was developed
systematically in \cite{ChenGeorgiouPavonPartI,ChenGeorgiouPavonPartII}.
These works substantially extended the earlier program of steady-state
covariance assignment initiated by Skelton and collaborators
\cite{HotzSkelton1987}, by addressing finite-horizon steering between
prescribed probability distributions. Part~II of this two-part contribution, dealt with  control and stochastic-disturbances entering the system dynamics through distinct channels.

In this development, \cite[Theorem~3]{ChenGeorgiouPavonPartII} asserts that the differential Lyapunov system 
\begin{equation}
  \dot\Sigma(t)
  =A\Sigma(t)+\Sigma(t) A^{\mathsf T}
   +BU(t)^{\mathsf T}+U(t)B^{\mathsf T}+Q,
  \label{eq:lyap}
\end{equation}
with $\Sigma(t)\in\mathbb S^n$, constant matrices $(A,B)\in\R^{n\times n}\times\R^{n\times m}$ and $Q\in\mathbb S^n$, $Q\succeq0$, and control $U(t)\in\R^{n\times m}$ 
is controllable if and only if the underlying pair $(A,B)$
is controllable, and moreover, that for any two 
endpoint covariances $\Sigma_0,\Sigma_T\succ 0$ and any $Q\succeq0$, there exists a smooth control producing a trajectory $\{\Sigma(t) \mid t\in[0,T]\}$
that remains positive definite throughout the prescribed time interval. Here $\mathbb S^n$ denotes the space of real symmetric
$n\times n$ matrices, and $\mathbb S_{++}^n$ its positive-definite cone.
Recently, Fengjiao Liu and Panagiotis Tsiotras kindly brought to our attention a gap in the induction used
in the published proof. Specifically, the argument invokes the ability to
prescribe endpoint values of an auxiliary control, without establishing
that this property is preserved by the induction step. The statement of
the theorem is nevertheless correct.

Well before this gap was brought to our attention, Abdelgalil and Georgiou \cite{AbdelgalilGeorgiou2025} 
had developed an independent route that establishes the controllability of the Lyapunov dynamics from a stronger
controllability result for state-transition dynamics that they obtained.
Specifically, under the Kalman rank
controllability condition on $(A,B)$, the state-transition dynamics
\begin{equation}
  \dot\Phi(t)=(A+BK(t))\Phi(t),\qquad \Phi(0)=I,
  \label{eq:transition-intro}
\end{equation}
are strongly controllable on $\GLp(n,\mathbb R)$: for every $T>0$ and every
$\Phi_T\in\GLp(n,\mathbb R)$, there exists a piece-wise smooth state-feedback gain
$K(\cdot)$ such that $\Phi(T)=\Phi_T$ \cite[Theorem 3]{AbdelgalilGeorgiou2025}. The proof is direct and proceeds by
factoring the prescribed terminal transition matrix into factors similar to symmetric positive-definite matrices, which serve as successive motion primitives.
The existence argument in \cite{AbdelgalilGeorgiou2025} rests on classical
factorization results of Ballantine, while a companion paper
\cite{AbdelgalilGeorgiouFactorization2026} provides an explicit algorithm
for constructing such factors and, consequently, a piecewise-smooth feedback protocol, produced by a method that allows control of the condition number of the factors.

The passage from the controllability of the state-transition dynamics
\eqref{eq:transition-intro} to that of the homogeneous Lyapunov dynamics is
elementary. Given $\Sigma_0,\Sigma_T\succ0$, choose, for example,
\[
  \Phi_T=\Sigma_T^{1/2}\Sigma_0^{-1/2},
\]
and let $\Phi(\cdot)$ be a trajectory of
\eqref{eq:transition-intro} joining $I$ to $\Phi_T$. Then
\[
  \Sigma(t)=\Phi(t)\Sigma_0\Phi(t)^{\mathsf T}\succ 0
\]
satisfies the boundary conditions as well as \eqref{eq:lyap} with $Q=0$, by taking $U(t)=\Sigma(t)K(t)^\top$ with the feedback gain $K(\cdot)$ as in \cite{AbdelgalilGeorgiou2025}. 

The addition of a prescribed noise covariance $Q\succeq0$ does not alter
the fact that the reachable set of covariances
\[
\mathcal R_T^Q(\Sigma_0):=\{\Sigma(T)\mid \eqref{eq:lyap} \mbox{ holds, and }\Sigma(t)\succ 0, \;\forall  t\in[0,T]\}
\]
from $\Sigma(0)=\Sigma_0$, coincides with the open cone $\mathbb S_{++}^n$ of positive definite $n\times n$ matrices, which is already $\mathcal R_T^0(\Sigma_0)$ from \cite{AbdelgalilGeorgiou2025}. We first include a short argument below (Section \ref{appendix}) that builds a proof of the controllability assertion for the covariance dynamics based on \cite{AbdelgalilGeorgiou2025}.
We then directly repair the original induction argument of \cite{ChenGeorgiouPavonPartII} by supplying the missing steps. These, in addition, recover the smoothness assertion in
\cite{ChenGeorgiouPavonPartII}: it produces a smooth
matrix-valued input $U(\cdot)$, and thereby a smooth state-feedback gain,
while accommodating an arbitrary term $Q\succeq0$.
We conclude by noting that Liu and Tsiotras have provided a detailed study of the
covariance controllability of time-varying linear systems in 
\cite{LiuTsiotras2026}.

\subsection{Reduction to the homogeneous case}\label{appendix}

The homogeneous steering result of \cite{AbdelgalilGeorgiou2025}
extends to \eqref{eq:lyap} with any prescribed constant $Q\succeq0$.
To see this, define
\[
  \Gamma(\tau):=\int_0^\tau e^{sA}Qe^{sA^{\mathsf T}}\,ds\succeq0.
\]
Then $\Gamma(0)=0$ and
$\dot\Gamma(\tau)=A\Gamma(\tau)+\Gamma(\tau)A^{\mathsf T}+Q$.
Then, given $T>0$ and $\Sigma_0,\Sigma_T\succ0$, choose
$\epsilon\in(0,T)$ sufficiently small that
\[
  \Sigma_T-\Gamma(\epsilon)\succ0,
\]
which is possible since $\Gamma(\epsilon)\to 0$ as
$\epsilon\to 0$.
Take $U(t)=0$ for $t\in[0,T-\epsilon)$, and denote the solution to \eqref{eq:lyap}
 at $T-\epsilon$ by $M=\Sigma(T-\epsilon)$; clearly $M\succ0$.
On the remaining interval, reset time to $\tau=t-(T-\epsilon)$.
By homogeneous controllability, there is a positive-definite trajectory $S(\tau)$ for $\tau\in[0,\epsilon]$, with a piecewise-smooth input $U_1(\tau)$ satisfying
\[
  \dot S(\tau)=AS(\tau)+S(\tau)A^{\mathsf T}+BU_1^{\mathsf T}(\tau)+U_1(\tau)B^{\mathsf T},
\]
\[
  S(0)=M,\quad S(\epsilon)=\Sigma_T-\Gamma(\epsilon).
\]
Set $\Sigma(t):=S(t-(T-\epsilon))+\Gamma(t-(T-\epsilon))$ over $[T-\epsilon,T]$. Adding the 
 differential equations for $S$ and $\Gamma$ shows that $\Sigma(t)$ satisfies \eqref{eq:lyap}
with
\[
U(t)=
\begin{cases}
0, & 0\leq t<T-\epsilon,\\
U_1(t-(T-\epsilon)), & T-\epsilon\leq t\leq T.
\end{cases}
\]
Evidently, $\Sigma(t)\succ0$
throughout, while
\[
 \Sigma(0)=\Sigma_0,\quad \Sigma(T-\epsilon)=M,\quad \Sigma(T)=\Sigma_T.
\]
Therefore, concatenating the controlled trajectory over $[T-\epsilon, T]$ with the initial uncontrolled segment
readily proves positive-definite steering over $[0,T]$.
Thus, the result of \cite{AbdelgalilGeorgiou2025} extends to prescribed
additive noise with piecewise-smooth controls. The corrected induction
below additionally establishes smooth steering.

\section{The gap in the original induction argument}

We return to the assertion of \cite[Theorem~3]{ChenGeorgiouPavonPartII}, 
that provided $(A,B)$ is controllable, any two matrices
$\Sigma_0,\Sigma_T\succ0$ can be joined over any prescribed interval
$[0,T]$ by a smooth control $U(\cdot)$ in such a way that
$\Sigma(t)\succ0$ for all $t\in[0,T]$.

The argument in \cite{ChenGeorgiouPavonPartII} first treats the controllable canonical single-input
pair and then invokes Heymann's lemma and feedback equivalence to reduce
the general case to it.  In the canonical case, the proof uses induction
on the dimension and claims, as part of the induction hypothesis, that
the control may be assigned arbitrary values at $t=0$ and $t=T$.
That strengthened assertion is needed because, invoking the block
partition
\begin{equation}
  \Sigma(t)=
  \begin{bmatrix}
    \Sigma_1(t) & \sigma_2(t)\\
    \sigma_2(t)^{\mathsf T} & \sigma_3(t)
  \end{bmatrix},
  \label{eq:blockSigma}
\end{equation}
the vector $\sigma_2(t)$ plays the role of the control for the
$(n-1)$-dimensional problem, while its endpoint values are fixed by
$\Sigma_0$ and $\Sigma_T$.

The gap is that the identity 
\[
  \range\bigl(U\mapsto BU^{\mathsf T}+UB^{\mathsf T}\bigr)
  =
  \Null\bigl(Y\mapsto \Pi_{\range(B)^\perp}
  Y\Pi_{\range(B)^\perp}\bigr),
\]
between range $\range(\cdot)$ and null space $\Null(\cdot)$,
only guarantees the existence of a control producing a prescribed
admissible tangent direction.  It does not imply that the control can
subsequently be assigned arbitrary endpoint values.  In fact, for the
canonical choice $B=e_n$, the map
\[
  u\longmapsto e_nu^{\mathsf T}+ue_n^{\mathsf T}
\]
is injective.  Thus, once $\Sigma(\cdot)$ is fixed, the control is fixed
as well.  The endpoint interpolation property therefore has to be built
into the induction itself.

\section{The amended canonical induction}

We begin with the following elementary lemma on interpolating boundary values of a positive function and a number of its derivatives (jets) at the end-points of an interval.

\begin{lemma}
\label{lem:positive-jets}
Let $r\geq0$.  Suppose that numbers
\[
  d_0^{(j)},\ d_T^{(j)},\qquad j=0,\ldots,r,
\]
are prescribed, with $d_0^{(0)}>0$ and $d_T^{(0)}>0$.
Then there exists $d\in C^\infty([0,T])$ such that $d(t)>0$ for $t\in[0,T]$ with boundary values
\[
  d^{(j)}(0)=d_0^{(j)},\;
  d^{(j)}(T)=d_T^{(j)}
\]
for $j=0,\ldots,r$, for the function and its derivatives.
\end{lemma}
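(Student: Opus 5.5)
The plan is to build $d$ by gluing: near each endpoint use the Taylor polynomial that carries the prescribed jet, in the middle use a positive constant, and join the pieces with smooth cutoff functions. Because the cutoffs take values in $[0,1]$, a convex combination of positive functions stays positive. So the only thing to arrange is that each local piece is positive on the set where it is actually used.

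First I define the endpoint polynomials
\[
p_0(t)=\sum_{j=0}^r \frac{d_0^{(j)}}{j!}t^j,\qquad
p_T(t)=\sum_{j=0}^r \frac{d_T^{(j)}}{j!}(t-T)^j .
\]
These satisfy $p_0^{(j)}(0)=d_0^{(j)}$ and $p_T^{(j)}(T)=d_T^{(j)}$ for $j\le r$. Since $p_0(0)=d_0^{(0)}>0$ and $p_T(T)=d_T^{(0)}>0$, continuity gives a $\delta\in(0,T/2)$ with $p_0>0$ on $[0,2\delta]$ and $p_T>0$ on $[T-2\delta,T]$.

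Next I fix a $C^\infty$ cutoff $\chi_0:[0,T]\to[0,1]$ with $\chi_0\equiv1$ on $[0,\delta]$ and $\chi_0\equiv0$ on $[2\delta,T]$. Such a function can be built from the standard function $e^{-1/s}$. Let $\chi_T(t):=\chi_0(T-t)$; its support is disjoint from that of $\chi_0$ because $2\delta<T-2\delta$ once $\delta<T/4$, so I shrink $\delta$ to ensure this. Then I set
\[
d(t)=\chi_0(t)p_0(t)+\chi_T(t)p_T(t)+\bigl(1-\chi_0(t)-\chi_T(t)\bigr)\cdot 1 .
\]
At each $t$, at most one of $\chi_0,\chi_T$ is nonzero. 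Hence $d(t)$ is a convex combination of $1$ with either $p_0(t)$ (where $t\in[0,2\delta]$) or $p_T(t)$ (where $t\in[T-2\delta,T]$), and in both cases $d(t)>0$. The function is smooth, being built from smooth pieces. On $[0,\delta]$ we have $d=p_0$, and on $[T-\delta,T]$ we have $d=p_T$, so all derivatives up to order $r$ at the endpoints match the prescribed values (in fact the entire jets agree with the polynomials).

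The main step needing care is choosing $\delta$ so that both conditions hold at once: positivity of the Taylor polynomials on the gluing region, and disjointness of the two cutoff supports. Both are secured by taking $\delta$ small, so I do not expect a genuine obstacle. The only subtlety is ensuring the cutoff $\chi_0$ exists and is identically $1$ near $0$, so that the derivatives of $d$ at $0$ are exactly those of $p_0$ with no extra terms from the product rule.
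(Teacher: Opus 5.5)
Your proof is correct, but it takes a different route from the paper's. The paper writes $d=e^{h}$. Since $d_0^{(0)},d_T^{(0)}>0$, the relation $d'=h'd$ lets the prescribed jets of $d$ be converted recursively into endpoint jets of $h=\log d$. A single two-point Hermite interpolating polynomial $h$ then realizes those jets, and positivity of $e^{h}$ holds on all of $[0,T]$ automatically.

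You instead handle each endpoint separately with its Taylor polynomial, which is positive only near its base point. You recover global positivity by a convex-combination argument with cutoffs. Your choice $\delta<T/4$ correctly gives disjoint cutoff supports, $1-\chi_0-\chi_T\ge 0$, and $d=p_0$ on $[0,\delta]$, $d=p_T$ on $[T-\delta,T]$. So the jets match exactly, with no product-rule contamination.

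The trade-offs are these. The paper's route is shorter, needs no bump functions, and yields a real-analytic $d$ (the exponential of a polynomial). It pays for this by inverting the jet map of $\exp$ and invoking two-point Hermite interpolation. Your route needs neither. Because it uses only the convexity of $(0,\infty)$, it carries over verbatim to jets valued in any open convex set, for instance matrix paths in $\Spp$. There the exponential trick becomes awkward, since $\frac{d}{dt}e^{H(t)}$ does not factor for non-commuting $H(t)$ and $\dot H(t)$.
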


\begin{proof}
Write $d=e^h$.  Since $d_0^{(0)}$ and $d_T^{(0)}$ are positive, the
prescribed jets of $d$ determine, recursively, unique endpoint jets of
$h=\log d$.  A Hermite interpolation polynomial can be chosen with
those endpoint jets.  Setting $d=e^h$ gives the conclusion.
\end{proof}

Now let $n\geq 1$, $e_n$ denote the $n$th standard basis vector
of $\mathbb R^n$, and define recursively for $n\geq2$
\begin{equation}
A_n=
\begin{bmatrix}
A_{n-1} & b_{n-1}\\
0 & 0
\end{bmatrix},
\qquad
B_n=e_n,
  \label{eq:canonical-pair}
\end{equation}
where $A_1=0$ and $b_{n-1}=e_{n-1}\in\mathbb R^{n-1}$.
Thus, $A_n$ is the standard (companion) shift matrix and $(A_n,B_n)$ is
the standard controllable canonical pair.
We prove a stronger statement than is required for a canonical steering with prescribed jets (values of derivatives) for the control at the two endpoints.

\begin{proposition}
\label{prop:canonical}
Fix integers $n\geq1$ and $r\geq0$, and a time horizon $T>0$.  Let $Q_n\succeq0$,
$\Sigma_0,\Sigma_T\succ0$, and prescribe arbitrary endpoint jets
\[
  U_n^{(j)}(0),\; U_n^{(j)}(T),
 \; j=0,\ldots,r,
\]
for a vector-valued control $U_n:[0,T]\to\R^n$.
Then there exist $C^\infty$-functions
$\Sigma:[0,T]\to\Spp$ (the $n\times n$ positive definite matrices) and $U_n:[0,T]\to\R^n$ such that
\begin{equation}
  \dot\Sigma(t)
  =A_n\Sigma(t)+\Sigma(t) A_n^{\mathsf T}
   +B_nU_n(t)^{\mathsf T}+U_n(t)B_n^{\mathsf T}+Q_n,
  \label{eq:canonical-lyap}
\end{equation}
with
\[
  \Sigma(0)=\Sigma_0,
  \qquad
  \Sigma(T)=\Sigma_T,
\]
and the prescribed endpoint jets of $U_n(t)$.
\end{proposition}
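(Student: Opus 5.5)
We argue by induction on $n$. The inductive statement is Proposition~\ref{prop:canonical} itself, for \emph{all} $r\geq0$ simultaneously. The key idea is that passing from dimension $n$ to $n-1$ raises the jet order by one. This is exactly why prescribed jets, and not merely endpoint values, must be carried along.

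\textbf{Base case $n=1$.} Here $A_1=0$, $B_1=1$, and the equation reads $\dot\sigma=2u+q$. Prescribing $u^{(j)}$ at $0,T$ for $j\le r$ amounts to prescribing $\sigma^{(j)}$ for $j\le r+1$, via $\sigma^{(j+1)}=2u^{(j)}+q\,\delta_{j0}$. The values $\sigma^{(0)}=\Sigma_0,\Sigma_T$ are positive. Lemma~\ref{lem:positive-jets} with order $r+1$ then gives a smooth positive $\sigma$, and we set $u=(\dot\sigma-q)/2$.

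\textbf{Inductive step: reduction.} Partition $\Sigma$ as in \eqref{eq:blockSigma}, and partition $Q_n=\begin{bmatrix}Q_1&q_2\\ q_2^{\mathsf T}&q_3\end{bmatrix}$ and $U_n=(u,u_n)$ with $u\in\R^{n-1}$. Using \eqref{eq:canonical-pair}, equation \eqref{eq:canonical-lyap} splits into three equations:
\[
\dot\Sigma_1=A_{n-1}\Sigma_1+\Sigma_1A_{n-1}^{\mathsf T}+b_{n-1}\sigma_2^{\mathsf T}+\sigma_2b_{n-1}^{\mathsf T}+Q_1,
\]
\[
\dot\sigma_2=A_{n-1}\sigma_2+b_{n-1}\sigma_3+u+q_2,\qquad \dot\sigma_3=2u_n+q_3.
\]
The first equation is the $(n-1)$-dimensional canonical problem \eqref{eq:canonical-lyap}, with $\sigma_2$ playing the role of the control $U_{n-1}$. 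Conversely, any smooth $\Sigma_1,\sigma_2,\sigma_3$ satisfying the first equation yield a unique smooth $U_n$ through the last two equations.

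\textbf{Inductive step: jet bookkeeping.} At each endpoint, the data $\Sigma_0$ (resp.\ $\Sigma_T$) together with the prescribed jets of $U_n$ up to order $r$ recursively determine the jets of $\sigma_2$ and $\sigma_3$ up to order $r+1$. Indeed, $\sigma_3^{(j+1)}$ is determined by $u_n^{(j)}$, and $\sigma_2^{(j+1)}$ is determined by $\sigma_2^{(j)},\sigma_3^{(j)},u^{(j)}$. We apply the induction hypothesis in dimension $n-1$ with jet order $r+1$. Its data are $Q_1\succeq0$, the positive definite blocks $\Sigma_1(0),\Sigma_1(T)$ (leading blocks of $\Sigma_0,\Sigma_T$), and the computed jets of $\sigma_2$. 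This produces smooth $\Sigma_1\succ0$ and $\sigma_2$ with those jets. The jets of $\Sigma_1$ at the endpoints up to order $r+2$ are then forced by its ODE from the jets of $\sigma_2$ up to order $r+1$.

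\textbf{Inductive step: positivity.} It remains to choose $\sigma_3$ so that $\Sigma\succ0$ and $\sigma_3$ has the required jets. Equivalently, the Schur complement $d:=\sigma_3-\sigma_2^{\mathsf T}\Sigma_1^{-1}\sigma_2$ must be positive. The required endpoint jets of $d$ up to order $r+1$ are computable from the known jets of $\sigma_3,\sigma_2,\Sigma_1$. Moreover, $d(0)>0$ and $d(T)>0$ because $\Sigma_0,\Sigma_T\succ0$. Lemma~\ref{lem:positive-jets} gives a smooth positive $d$ with these jets. We then set $\sigma_3:=d+\sigma_2^{\mathsf T}\Sigma_1^{-1}\sigma_2$, so that $\Sigma\succ0$ on $[0,T]$ and $\Sigma(0)=\Sigma_0$, $\Sigma(T)=\Sigma_T$. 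Finally, we define $U_n$ from the last two equations, and its jets match the prescribed ones by construction.

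\textbf{Main obstacle.} The delicate point is checking that the jet orders are consistent, i.e.\ that nothing is overdetermined. Step by step:
\begin{itemize}
\item Jets of $U_n$ up to order $r$ require jets of $\sigma_2,\sigma_3$ up to order $r+1$.
\item Jets of $\sigma_2$ up to order $r+1$ are exactly what the order-$(r+1)$ induction hypothesis provides.
\item Jets of $d$ up to order $r+1$ need jets of $\Sigma_1$ only up to order $r+1$, which are determined by those of $\sigma_2$.
\end{itemize}
Hence every quantity is either free or consistently forced, and no compatibility condition arises.
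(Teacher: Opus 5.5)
Your proposal is correct and follows the paper's proof essentially step for step. Both argue by induction on $n$ over all jet orders at once, use $\sigma_2$ as the control of the $(n-1)$-dimensional problem with jets raised to order $r+1$, choose the Schur complement $d=\sigma_3-\sigma_2^{\mathsf T}\Sigma_1^{-1}\sigma_2$ positive with the required endpoint jets via Lemma~\ref{lem:positive-jets}, and then recover $U_n$ from the remaining two block equations.
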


\begin{proof}
We prove the claim for all $r\geq0$ by induction on $n$.
For $n=1$, equation \eqref{eq:canonical-lyap} is
\begin{equation}
  \dot\sigma=2u+q,
  \qquad q\geq0.
  \label{eq:scalar}
\end{equation}
Prescribing $u^{(j)}$ at the endpoints, for $j=0,\ldots,r$, is
equivalent to prescribing the endpoint derivatives of $\sigma$ up to
order $r+1$, via
\[
  \sigma^{(j+1)}
  =2u^{(j)}+\delta_{j0}q.
\]
Lemma~\ref{lem:positive-jets} gives a positive smooth function
$\sigma$ with the prescribed endpoint values and jets, and then
$u=(\dot\sigma-q)/2$ has the desired endpoint jets.

Assume now that the result holds in dimension $n-1$, for every order of
endpoint jets.  Partition
\begin{equation}
  \Sigma=
  \begin{bmatrix}
    P & p\\
    p^{\mathsf T} & p_0
  \end{bmatrix},
  \qquad
  Q_n=
  \begin{bmatrix}
    Q_1 & q\\
    q^{\mathsf T} & q_0
  \end{bmatrix},
  \qquad
  U_n=
  \begin{bmatrix}
    w\\
    w_0
  \end{bmatrix},
  \label{eq:block-data}
\end{equation}
where $P,Q_1\in\mathbb S^{n-1}$, $P\succ 0$, $Q_1\succeq 0$, $p,q,w\in\R^{n-1}$, and $p_0,q_0,w_0$
are scalar.  Equation \eqref{eq:canonical-lyap} is equivalent to
\begin{subequations}
\label{eq:block-system}
\begin{align}
  \dot P
  &=A_{n-1}P+PA_{n-1}^{\mathsf T}
    +b_{n-1}p^{\mathsf T}+pb_{n-1}^{\mathsf T}+Q_1,
  \label{eq:block-P}\\
  \dot p
  &=A_{n-1}p+b_{n-1}p_0+w+q,
  \label{eq:block-z}\\
  \dot p_0
  &=2w_0+q_0.
  \label{eq:block-s}
\end{align}
\end{subequations}

The zeroth-order endpoint values of $p$ and $p_0$ are fixed by
$\Sigma_0$ and $\Sigma_T$.  The prescribed endpoint jets of $w_0$
determine the endpoint derivatives of $p_0$ recursively through order
$r+1$ by
\begin{equation}
  p_0^{(j+1)}=2w_0^{(j)}+\delta_{j0}q_0,
  \qquad j=0,\ldots,r.
  \label{eq:s-jets}
\end{equation}
Likewise, the prescribed endpoint jets of $w$, together with the already
determined jets of $p_0$, determine the endpoint derivatives of $p$
through order $r+1$ by
\begin{equation}
  p^{(j+1)}
  =w^{(j)}+A_{n-1}p^{(j)}+b_{n-1}p_0^{(j)}+\delta_{j0}q,
  \qquad j=0,\ldots,r.
  \label{eq:z-jets}
\end{equation}

Equation \eqref{eq:block-P} is precisely the
$(n-1)$-dimensional canonical Lyapunov equation, with $p$ as its control.
Apply the induction hypothesis in dimension $n-1$ with endpoint jets of
$p$ prescribed through order $r+1$.  We obtain smooth functions
$P(t)\succ0$ and $p(t)$ satisfying \eqref{eq:block-P}, the required
endpoint conditions, and the prescribed endpoint jets of $p$.

Define
\[
  \phi(t):=p(t)^{\mathsf T}P(t)^{-1}p(t).
\]
Since the prescribed endpoint matrices $\Sigma_0$ and $\Sigma_T$ are
positive definite, their Schur complements give
\[
  p_0(0)-\phi(0)>0,
  \qquad
  p_0(T)-\phi(T)>0.
\]
The desired endpoint jets of $p_0$ and the already constructed function
$\phi$ determine endpoint jets for $d:=p_0-\phi$, with positive zeroth
order values.  By Lemma~\ref{lem:positive-jets}, choose a smooth
$d(t)>0$ with those endpoint jets and set
\begin{equation}
  p_0(t):=\phi(t)+d(t).
  \label{eq:choose-s}
\end{equation}
Then $p_0$ has all the desired endpoint jets and, by the Schur complement
criterion,
\[
  \Sigma(t)=
  \begin{bmatrix}
    P(t) & p(t)\\
    p(t)^{\mathsf T} & p_0(t)
  \end{bmatrix}
  \succ0
 \mbox{ for }t\in[0,T].
\]
Finally, define
\begin{equation}
  w:=\dot p-A_{n-1}p-b_{n-1}p_0-q,
  \quad
  w_0:=\frac{\dot p_0-q_0}{2}.
  \label{eq:recover-control}
\end{equation}
Equations \eqref{eq:s-jets}--\eqref{eq:z-jets} ensure that $w$ and
$w_0$ have exactly the prescribed endpoint jets.  Thus
\eqref{eq:block-system} holds and the induction is complete.
\end{proof}

\begin{remark}
Only the case $r=0$ is needed to complete the proof of Theorem~3.  The
higher-order jet statement is introduced because the $n$-dimensional
control depends on one derivative of the control in the
$(n-1)$-dimensional problem; establishing the claim simultaneously for
all finite jet orders makes the induction complete without any loss of
smoothness.
\end{remark}

\section{Reduction to a general controllable pair}

Proposition~\ref{prop:canonical} can be readily extended to a general controllable pair $(A,B)$, following the argument given in \cite{ChenGeorgiouPavonPartII}.  Specifically, by Heymann's lemma
\cite{Hautus1977}, one may choose $v\in\R^m$ and
$K_0\in\R^{m\times n}$ so that
\[
  (A-BK_0,Bv)
\]
is controllable.  Single-input pole placement then provides a row vector
$\kappa^{\mathsf T}$ such that
\[
  A-BK_0-(Bv)\kappa^{\mathsf T}
\]
is nilpotent.  Upon setting $K:=K_0+v\kappa^{\mathsf T}$, the pair
$(A-BK,Bv)$ remains controllable with nilpotent state matrix.  Since a
controllable single-input nilpotent pair is similar to
\eqref{eq:canonical-pair}, Proposition~\ref{prop:canonical}
provides, after a similarity transformation and transformation back, a
smooth positive-definite path satisfying
\begin{equation}
  \dot\Sigma
  =(A-BK)\Sigma+\Sigma(A-BK)^{\mathsf T}
   +(Bv)w^{\mathsf T}+w(Bv)^{\mathsf T}+Q
  \label{eq:feedback-reduced}
\end{equation}
for some smooth $w:[0,T]\to\R^n$, with the prescribed covariance
endpoints.
Set
\begin{equation}
  U:=wv^{\mathsf T}-\Sigma K^{\mathsf T}.
  \label{eq:lift-control}
\end{equation}
Then
\[
  BU^{\mathsf T}+UB^{\mathsf T}
  =Bvw^{\mathsf T}+wv^{\mathsf T}B^{\mathsf T}
   -BK\Sigma-\Sigma K^{\mathsf T}B^{\mathsf T}.
\]
Substitution into \eqref{eq:feedback-reduced} gives exactly
\eqref{eq:lyap}.  Hence the desired positive-definite steering property
holds for every controllable pair $(A,B)$.

\section{Conclusion}

The statement \cite[Theorem~3]{ChenGeorgiouPavonPartII} is correct.  The issue in the original proof was the
unsupported claim that arbitrary endpoint values of the control
can be specified.  Proposition~\ref{prop:canonical}, builds the endpoint
conditions into a strengthened induction on endpoint jets. The authors are indebted to Fengjiao Liu and Panagiotis Tsiotras for pointing out the issue in the original argument. 

\bibliographystyle{IEEEtran}
\bibliography{refs}

\end{document}